\newtheoremstyle{mplain}
{\topsep}   
{\topsep}   
{\itshape}  
{0pt}       
{\scshape} 
{:}         
{5pt plus 1pt minus 1pt} 
{}          
\renewenvironment{proof}[1][\proofname]{{ \scshape #1. }}{\qed}
\theoremstyle{mplain}
\let\oldr@@t\r@@t
\def\r@@t#1#2{%
\setbox0=\hbox{$\oldr@@t#1{#2\,}$}\dimen0=\ht0
\advance\dimen0-0.2\ht0
\setbox2=\hbox{\vrule height\ht0 depth -\dimen0}%
{\box0\lower0.4pt\box2}}
\LetLtxMacro{\oldsqrt}{\sqrt}
\renewcommand*{\sqrt}[2][\ ]{\oldsqrt[#1]{#2}}
\newcommand{\E}{\mathbb{E}}
\renewcommand{\Pr}{\mathbb{P}}
\newcommand{\eps}{\varepsilon}
\newcommand{\Op}{O_\mathbb{P}}
\newcommand{\op}{o_\mathbb{P}}
\newcommand{\Omp}{\Omega_\mathbb{P}}
\newcommand{\omp}{\omega_\mathbb{P}}
\newcommand{\Tp}{\Theta_\mathbb{P}}
\newcommand{\R}{\mathbb{R}}
\newcommand{\F}{\mathcal{F}}
\newcommand{\FC}{\langle\mathcal{F}\rangle}
\newcommand{\G}{\mathcal{G}}
\newcommand{\Br}{\mathcal{B}_r}
\newcommand{\N}{\mathbb{N}}
\newcommand{\lme}{\ell}
\newcommand{\smallcost}{\lambda}
\newcommand{\bigcost}{L}
\newcommand{\holes}{\rho}
\DeclareMathOperator{\Var}{Var}
\DeclareMathOperator{\patch}{Patch}
\theoremstyle{mplain}
\newtheorem{theorem}{Theorem}[]
\crefname{theorem}{theorem}{theorems} 
\Crefname{theorem}{Theorem}{Theorems}
\newtheorem{lemma}[theorem]{Lemma}
\crefname{lemma}{lemma}{lemmas} 
\Crefname{lemma}{Lemma}{Lemmas}
\newtheorem{proposition}[theorem]{Proposition}
\crefname{proposition}{proposition}{propositions} 
\Crefname{proposition}{Proposition}{Propositions}
\crefname{cor}{corollary}{corollaries} 
\Crefname{cor}{Corollary}{Corollaries}
\newtheorem{definition}[theorem]{Definition}
\crefname{definition}{definition}{definitions} 
\Crefname{definition}{Definition}{Definitions}
\crefname{example}{example}{examples} 
\Crefname{example}{Example}{Examples}
\newtheorem{assumption}[theorem]{Assumption}
\crefname{assumption}{assumption}{assumptions} 
\Crefname{assumption}{Assumption}{Assumptions}
\newtheorem{remark}[theorem]{Remark}
\crefname{remark}{remark}{remarks} 
\Crefname{remark}{Remark}{Remarks}
\crefname{conj}{conjecture}{conjectures} 
\Crefname{conj}{Conjecture}{Conjectures}
\crefname{problem}{problem}{problems} 
\Crefname{problem}{Problem}{Problems}
\newtheorem{claim}{Claim}[]
\crefname{claim}{claim}{claims} 
\Crefname{claim}{Claim}{Claims}
\numberwithin{theorem}{section}
\title{\scshape A concentration inequality for random combinatorial optimisation problems}
	\author{Joel Larsson Danielsson}
 \affil{{ \small Department of Mathematical Sciences, Chalmers University of Technology  \\ \texttt{joel.danielsson@chalmers.se}}}
\begin{document}
\maketitle

\begin{abstract}
Given a finite set $S$, i.i.d. random weights $\{X_i\}_{i\in S}$, and a family of subsets $\F\subseteq 2^S$, we consider the minimum weight of an $F\in \F$:
\[
M(\F):= \min_{F\in \F} \sum_{i\in F}X_i.
\]
In particular, we investigate under what conditions this random variable is sharply concentrated around its mean.

We define the \emph{patchability} of a family $\F$: essentially, how expensive is it to finish an almost-complete $F$ (that is, $F$ is close to $\F$ in Hamming distance) if the edge weights are re-randomized?
Combining the patchability of $\F$, applying the Talagrand inequality to a dual problem, and a sprinkling-type argument, we prove a concentration inequality for the random variable $M(\F)$.
\end{abstract}
\section{Introduction}
\subsection{Combinatorial minimum weight problems}
The class of optimization problems that we are interested in does not necessarily involve graphs, but before giving the general definition we will first discuss them in a graph setting.
Suppose we have a finite graph $K$ (typically $K_n$ or $K_{n,n}$), a family $\G$ of subgraphs of $K$ , and a collection of i.i.d.\ non-negative random edge weights $\{X_e\}_{e\in E(K)}$. 
We are interested in the random variable $M(\G):= \min_{G\in \G} \sum_{e\in E(K)}X_e$, i.e.\ the lowest weight of a $G \in \G$.

Two famous examples are the random assignment problem and the random minimum spanning tree problem. Let $\mathcal{M}$ be the set of perfect matchings on the complete bipartite graph $K_{n,n}$, and $\mathcal{T}$ the set of spanning trees on the complete graph $K_n$. When both graphs are equipped with i.i.d. $U(0,1)$ edge weights, it has been shown that $M(\mathcal{M})\to \zeta(2)=\pi^2/6$ in probability as $n\to\infty$~\cite{matching-zeta-2}, and similarly $M(\mathcal{T})\to\zeta(3)$~\cite{tree-zeta-3}.

The spanning tree problem will be a recurring example throughout this paper, and we will prove a slight generalization of~\cite{tree-zeta-3} as an application of our concentration inequality.

A proof of convergence in probability of $M(\G)$ typically consists of two parts: First, the convergence of the expected value $\E[M(\G)]$, and then sharply concentration of $M(\G)$ around its expected value. To answer the first question one often needs a method tailored\footnote{In \cite{tree-zeta-3} a greedy algorithm was analysed to show convergence of the expected value for the minimum spanning tree problem, while in \cite{matching-larsson,wastlundmatching} a local graph limit approach was used for the random assignment problem.} to the specific family $\G$. 

In this paper, we are concerned only with the second question: When is $M(\G)$ sharply concentrated? That is, under what conditions is it true that the random variable $M(\G)$ is close to its expected value (or median) with high probability?
Our aim in this paper is to provide a `user-friendly' concentration inequality for $M(\G)$, with conditions that are easy to check.

Although we mainly have graph applications in mind, we will work in a slightly more general setting. Instead of a graph $K$ and a family $\G$ of subgraphs of $K$, we will work with a finite ground set $S$ and a family $\F\subseteq 2^S$ of subsets of $S$. We will assume $|S|=N$, and will frequently identify $S$ with $[N]=\{1,2,\ldots ,N\}$. To the elements $i\in S$, we associate i.i.d.\ random weights $X_i$, and for each set $G\subseteq S$ we let $X_G$ denote the total weight of the elements in $G$. Then, analogously to $M(\G)$ in the graph setting, we define
\[
M(\F):= \min_{F\in \F} X_F.
\]
Without loss of generality, we may assume that $\F$ contains only minimal sets: that is, if $G\subset F\in \F$, then $G\notin \F$. We also define $\lme(\F):=\max_{F\in \F}|F|$.

\subsection{Concentration inequalities}
In probabilistic combinatorics, one often needs to show that the distribution of some random variable $Z$ concentrates around some value $c$: that for any small $\eps>0$, 
$Z$ lies in the interval $[(1-\eps)c,(1+\eps)c]$ with high probability.\footnote{If such concentration holds, the median has to be close to $c$, and it is usually easy to show that the expected value is also close to $c$.}
In particular, it is a common situation that one has a product space $\Omega=\prod_{i\in S}\Omega_i$, random variables $X_i$ on the $\Omega_i$'s, and a function $g:\Omega\to \R$. One then wants to show concentration of the random variable $Z=g(X_1,X_2,\ldots X_N)$. With some abuse of notation, we can also refer to $Z(\omega)=g(X_1(\omega),X_2(\omega),\ldots X_N(\omega))$ by $g(\omega)$.

For many such functions $g$ of interest, it turns out that while $g$ does depend on all $N$ coordinates of its input, it only depends sensitively on a smaller number of coordinates.
Many concentration inequalities quantify in different ways this intuitive sense of $g$ not depending `too much' on any specific random variable $X_i$, or on any small set of these random variables.

The method of bounded differences considers the Lipschitz constant of $g$: How much can $g$ change, if only one of its inputs is changed? More precisely, we say that $g$ is $K$-Lipschitz if $|g(\omega)-g(\omega')|\leq K$ whenever $\omega,\omega'$ differ in only one coordinate. The McDiarmid inequality (based on the Azuma-Hoeffding martingale concentration inequality) bounds the size of the fluctuations around the mean by $O(K\sqrt{N})$.
This Lipschitz condition considers the worst-case change, which might very different from the \emph{typical} change: $|g(\omega)-g(\omega')|$ could be significantly smaller than $K$ for most pairs $\omega,\omega'$. (This tends to be the case for the random variable $M(\F)$, for families $\F$ that are not very small.) In a 2016 paper by Warnke~\cite{warnke-boundeddiff}, several variations on the McDiarmid inequality can be found, involving various \emph{typical-case} Lipschitz conditions. While these can greatly improve upon the inequalities based on worst-case Lipschitz constants for some functions $g$, computing the average-case Lipschitz constants for minimum-weight type problems is often not tractable.

\todo{influence inequality, discrete fourier transform, \cite{discrete-fourier-book,osss-ineq}}

Another powerful tool is the Talagrand inequality~\cite{talagrand1995concentration}, in particular the `certifiability' corollary, as found in~\cite[Thm 7.7.1]{probmeth}.
This inequality captures the intuition of a function not depending `too much' on any coordinates in a different way, by considering the `certifiability' of $g$: what is the smallest number of random variables $X_i$ that one can show to an observer to verify that the event $\{g({X})\leq s\}$ has occured (for some $s$)? In the case of minimum weight problems, $M(\F)$ is $\lme(\F)$-certifible: If $\{M(\F)\leq s\}$, then by definition there exists an $F\in \F$ with $X_F=M(\F)\leq s$, and it suffices to look at the at most $\lme(\F)$ weights of $F$ to verify that $X_F\leq s$.
A major benefit of the Talagrand inequality is that it does not depend on the dimension $N$. For $M(\F)$, it improves on the McDiarmid bound of $O(K\sqrt{N})$, down to order $O(K\sqrt{\lme(\F)})$. As far as we are aware, Talagrand-type inequalities have only been established for worst-case Lipschitz constants.

Let's consider a naive application of the bounded difference method and the Talagrand inequality to the minimum spanning tree problem. Here $N=\binom{n}{2}$, $\lme(\mathcal{T})=n-1$ and $K=1$, so the bounded difference method gives that $M(\mathcal{T})$ has a standard deviation of $O(n)$, which the Talagrand inequality lowers to $O(\sqrt{n})$. However, $\E[M(\mathcal{T})]=O(1)$ as $n \to \infty$ (since it converges to $\zeta(3)$), so neither bound is useful.
Both of these inequalities suffer from using the worst case Lipschitz constant $K=1$, while the typical change in $M(\mathcal{T})$ when changing one edge weight is of order\footnote{Follows from the proof of theorem \ref{thm:mst} with $r=1$.} $1/n$.

It is easy to apply our patchability inequality to the minimum spanning tree problem. This gives an upper bound of $O(n^{-1/4})$, implying sharp concentration (see section \ref{section:mst}). The `patchability' criterion (definition \ref{def:patch}) behaves more like the average case Lipschitz constant than the worst case.
However, this only meant as a comparison between these three concentration inequalities. Much stronger results have been obtained previously, for instance a central limit theorem for $M(\mathcal{T})$ was established in~\cite{janson-mst-clt}, with a standard deviation of order $n^{-1/2}$.
\footnote{For instance, in~\cite{frieze-spanningtree-cost-and-weight} a log-Sobolev inequality is used}

\subsection{Asymptotic notation}
In addition to the commonly used asymptotic notation of $O,o,\omega,\Omega$, we will also use $\Op,\op,\omp$ and $\Omp$ to denote the probabilistic versions:
For sequences $X_n,Y_n$ of random variables, we say that $X_n=\op(Y_n)$ and $Y_n=\omp(X_n)$ if $X_n/Y_n\to 0$ in probability as $n\to \infty$. We say that $X_n=\Op(Y_n)$ and $Y_n=\Omp(X_n)$ if there for any $\eps>0$ exists a constant $C>0$ such that $X_n\leq CY_n$ with probability at least $1-\eps$ for all sufficiently large $n$. Furthermore, $X_n=\Tp(Y_n)$ iff $X_n=\Op(Y_n)$ and $X_n=\Omp(Y_n)$.

Finally, we use $f(n)\ll g(n)$ to denote $f(n)=o(g(n))$. And unless otherwise specified, the asymptotics will always be implicitly `as $n\to \infty$' (or `as $N\to \infty$').

\section{Results}
\subsection{Patchability condition}
\label{section:patchability}
Loosely speaking, our concentration inequality says that if any `almost-complete' member of $\F$ (missing on the order of $\sqrt{\lme(\F)}$ elements) can be completed at cost $o(\E M(\F))$ (whp), then the optimal cost $M(\F)$ has to be sharply concentrated.
Before stating the theorem, we need to make this notion more precise.

As noted earlier, we can assume without loss of generality that $\F$ contains only minimal sets.
We will let $\FC$ denote the upwards closure of $\F$: \(\FC=\{G\subseteq S: \exists F\in \F: F\subseteq G\}\). 

Since the weights are non-negative, $M(\F)=M(\FC)$.

For any $G,P\subseteq S$, we say that $P$ is a \emph{$G$-patch} if $G\cup P $ contains a member of $\F$, i.e. $G\cup P \in \FC$. Define the function $\holes:2^S\mapsto \N$ by 
\begin{align}
\holes(G):=d(G,\FC) = \min\{|P|: P\textrm{ is a $G$-patch}\}\label{def:holes},
\end{align}
where $d$ denotes Hamming distance: $d(G,F)=|G\Delta F|$ and $d(G,\F)=\min_{F\in F}|G\Delta F|$. 

Let $\Br=\Br(\FC)$ be the $r$-neighbourhood of $\FC$ in the Hamming metric, i.e.\ the set of all $G\subseteq S$ with $\holes(G)\leq r$.
For any set $G\subseteq S$, let the random variable $\mathrm{Patch}(G)$ be the minimum weight of a $G$-patch:
\[\mathrm{Patch}(G):= \min\{X_P: P \textrm{ is a }G\textrm{-patch}\}.\]

\begin{definition}
We say that a $G\subseteq S$ is $(\smallcost,\eps)$-\emph{patchable} (with respect to the random weights $X_i$) if $G$ can be patched at cost at most $\smallcost$ with probability at least $1-\eps$. That is,
\label{def:patch}
\[\Pr(\mathrm{Patch}(G)\leq \smallcost)\geq 1-\eps.\]
The family $\F$ is said to be $(r,\smallcost,\eps)$-\emph{patchable} if every $G\in \Br$ (that is, $G\subseteq S$ with $\holes(G)\leq r$) is  $(\smallcost,\eps)$-patchable.
\end{definition}

\begin{remark}
$\mathrm{Patch}(G)$ can also be seen as a Hamming distance to $\FC$: if we define the randomly weighted Hamming distance by $D(G,F):=X_{G\Delta F}$, then $\mathrm{Patch}(G)=D(G,\FC)$.
In terms of these Hamming distances, $\F$ is $(r,\smallcost,\eps)$-patchable if any $G$ in the $r$-neighbourhood of $\FC$ w.r.t.\ the metric $d$ lies in the $\smallcost$-neighbourhood of $\FC$ w.r.t.\ the metric $D$, with probability at least $1-\eps$.
\end{remark}
In other words, $\F$ is $(r,\smallcost,\eps)$-\emph{patchable} if any $F\in \F$ from which an arbitrary $r$ elements has been removed (giving us a $G$ with $\holes(G)=r$), can be patched at cost at most $\smallcost$, with probability at least $1-\eps$. Patching $G$ will not necessarily restore the \emph{same} $F$, as we only require that our `patched' set $G\cup P$ contains \emph{some} member of $\F$.
It is important to note here that the set $G$ is not random, and in particular it is not chosen in a way that depends on the weights $X_i$.

When applying our inequality (theorem \ref{thm:concentration}), the main effort will usually be to show that this patchability condition is met, for some $r,\smallcost$ and $\eps$.
Note that if $X_i\leq 1$ with probability $1$, then any family $\F$ is trivially $(r,r,0)$-patchable for any $r$ -- simply put back the $r$ elements that were removed, at cost at most $r$. However, it is only for families that are $(r,\smallcost,\eps)$-patchable for some $\smallcost \ll r$ that our inequality (theorem \ref{thm:concentration}) improves upon a `standard' application of the Talagrand inequality.

\subsection{Probability distributions of the weights}
Most commonly, the edge weights $X_i$ are exponential or uniform $U(0,1)$. But the proof of our inequality can easily be generalised to a larger class of edge weight distributions, such as the positive powers of such random variables. The only assumption we will make on the distribution of $X_i$ is the following.

\begin{assumption}
\label{assumption:Aq}
For $q>0$ we say that the distribution of a random variable $X$ satisfies assumption $A(q)$ if the following holds:
For any $s\in (0,1)$, $X$ can be coupled to two copies $Y\overset{d}{=}Y'\overset{d}{=}X$ following the same distribution, such that $Y$ and $Y'$ are independent and surely
\begin{equation}
\label{eq:Aq}
     X \leq \min\left( \frac{Y}{(1-s)^{1/q}},\frac{Y'}{s^{1/q}}\right).
\end{equation}
\end{assumption}
For instance, the $(1/q)$:th power of exponential or uniform random variables satisfies $A(q)$. The assumption is also closely related to the so-called \emph{pseudo-dimension} of a random variable. We will discuss this in section \ref{section:mindominant}.
\begin{remark}
\label{remark:Aq-iter}
Note also 
that \cref{eq:Aq} can be iterated, so that for any $\{s_i\}_{i=1}^k$ with $\sum_{i=1}^k s_i=1$ we have that i.i.d. random variables $Y^{(i)}\overset{d}{=}X$ can be coupled to $X$ in such a way that $X \leq \min_{i\in [k]}\left({Y^{(i)}}/{s_i^{1/q}}\right).$ 
In particular, for $s_i=k^{-1}$, $X \leq k^{1/q}\min_{i\in [k]}\left({Y^{(i)}}\right).$
\end{remark}

\subsection{A `patchability' concentration inequality}
We are now ready to state our concentration inequality.
\label{section:theinequality}
\begin{theorem}
\label{thm:concentration}
Assume the distribution of the $X_i$'s satisfies $A(q)$ in assumption (\ref{assumption:Aq}).
Given $\eps>0$, let $\bigcost$ be such that $\Pr(M(\F)>\bigcost)\geq  \eps$.  
If $\F$ is $(r,\smallcost,\eps)$-patchable for some $r\geq \sqrt{8\log(\eps^{-1})\cdot \lme(\F)}$ and $\smallcost>0$, then with probability at least $1-2\eps$,
\begin{align}
\label{ineq:UB-thm}
    M(\F)&\leq \left(\bigcost^{\frac{q}{q+1}}+\smallcost^{\frac{q}{q+1}} \right)^{\frac{q+1}{q}}.
    \end{align}
In particular, if $m$ is the median of  $M(\F)$, $\eps<1/4$, and $\smallcost\leq m$, then for some constant $C=C(q)$, 
with probability at least $1-3\eps$,
\begin{align}
 \frac{|M(\F)-m|}{m} &\leq C \left(\frac{\smallcost}{m}\right)^{\frac{q}{q+1}}.
\end{align}
\end{theorem}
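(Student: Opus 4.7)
The plan is a two-round sprinkling argument that mirrors the three ingredients promised in the abstract: patchability, sprinkling, and Talagrand's inequality applied to a ``dual'' event. By iterating Assumption~$A(q)$ (\cref{remark:Aq-iter}), for any split parameter $s\in(0,1)$ one can couple each $X_i$ to two independent copies $Y_i, Y_i' \overset{d}{=} X_i$ satisfying
\[ X_i \leq \min\left(\frac{Y_i}{s^{1/q}},\;\frac{Y_i'}{(1-s)^{1/q}}\right). \]
The first batch $Y$ will select a round-one candidate $F^*\in\F$, the second batch $Y'$ will patch it, and $s$ is optimized at the end.

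Let $F^*\in\F$ minimize $Y_F$, so $Y_{F^*} = M(\F;Y)$. The event $\{M(\F;Y)\leq L\}$ is $\ell(\F)$-certifiable (any $F\in\F$ with $Y_F\leq L$ is a witness) and $M(\F;Y)$ is Lipschitz in the natural sense for the weight distribution, so Talagrand's certifiability inequality yields
\[ \Pr(M(\F;Y)\leq L)\cdot\Pr(M(\F;Y)\geq L + r) \leq \exp\bigl(-r^2/(4\ell(\F))\bigr) = \eps^2 \]
for $r = \sqrt{8\log(\eps^{-1})\ell(\F)}$. Combined with the quantile hypothesis on $L$, this gives $Y_{F^*}\leq L$ (after absorbing the Talagrand correction $r$ via the hypothesis on $r$) with probability at least $1-\eps$. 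This is the ``Talagrand on the dual problem'' step.

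Next, remove any $r$ elements from $F^*$ to form $G\subset F^*$. Since $G\cup(F^*\setminus G)=F^*\in\F$, one has $\holes(G)\leq r$, so $G\in\Br$. Conditioning on $Y$ fixes $G$; the independent weights $Y'$ then allow the $(r,\lambda,\eps)$-patchability of $\F$ to produce a $G$-patch $P$ with $Y'_P\leq\lambda$ with conditional probability $\geq 1-\eps$. On the intersection of the two good events (probability $\geq 1-2\eps$ by union bound), $G\cup P\in\FC$ and the coupling gives
\[ M(\F) \leq X_G + X_P \leq \frac{Y_G}{s^{1/q}} + \frac{Y'_P}{(1-s)^{1/q}} \leq \frac{L}{s^{1/q}} + \frac{\lambda}{(1-s)^{1/q}}. \]
Differentiating in $s$ yields the optimum $s^* = L^{q/(q+1)}/\bigl(L^{q/(q+1)}+\lambda^{q/(q+1)}\bigr)$ and upon substitution the main bound $\bigl(L^{q/(q+1)}+\lambda^{q/(q+1)}\bigr)^{(q+1)/q}$.

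The corollary is two applications of the main inequality. For the upper tail take $L=m$ (allowed since $\Pr(M>m)\geq 1/2 \geq \eps$) and Taylor-expand $\bigl(m^{q/(q+1)}+\lambda^{q/(q+1)}\bigr)^{(q+1)/q}$ around $\lambda/m\to 0$ to get $M(\F) \leq m + O\bigl(m(\lambda/m)^{q/(q+1)}\bigr)$. For the matching lower tail apply the main inequality with $L$ a lower quantile of $M$; since the $(1-2\eps)$-upper bound must exceed the median (as $1-2\eps > 1/2$), one obtains $L \geq \bigl(m^{q/(q+1)}-\lambda^{q/(q+1)}\bigr)^{(q+1)/q}$, giving the matching lower tail with an extra $\eps$ in the probability budget. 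The delicate step is the Talagrand application: correctly interpreting the quantile hypothesis on $L$ so that the certifiability inequality yields an upper tail bound on $Y_{F^*}$, and handling non-bounded weight distributions; the scale $r = \sqrt{8\log(\eps^{-1})\ell(\F)}$ is dictated by requiring the Talagrand exponent $\exp(-r^2/(4\ell(\F)))$ to equal $\eps^2$ so that the correction is exactly compatible with the patchability budget.
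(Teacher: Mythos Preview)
There is a genuine gap at the Talagrand step, and it is exactly the step where the paper's main idea lives.

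You apply Talagrand's certifiability inequality to $M(\F;Y)$ itself. But $M(\F;Y)$ is $K$-Lipschitz with $K$ equal to the essential supremum of the weight distribution, so Talagrand only yields concentration on the scale $K\sqrt{\lme(\F)}$ \emph{in weight units}. Even for weights bounded by $1$ this is $\sqrt{\lme(\F)}$, which in the regimes of interest dwarfs $L$ (for spanning trees $L=O(1)$ while $\sqrt{\lme(\F)}=\Theta(\sqrt n)$; the paper's introduction makes exactly this point). Your line ``$Y_{F^*}\le L$ (after absorbing the Talagrand correction $r$ via the hypothesis on $r$)'' is where the argument breaks: the correction coming out of Talagrand on $M(\F;Y)$ is a \emph{weight}, whereas the $r$ in the patchability hypothesis is a \emph{count of elements}. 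These live on different scales and cannot be identified; nothing in the hypotheses lets you absorb a weight-scale error of order $\sqrt{\lme(\F)}$. Consequently the subsequent step ``remove any $r$ elements from $F^*$'' does no work: removing elements does not bring $Y_G$ down to $L$, and you never established $Y_{F^*}\le L$.

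What the paper actually does---and what ``Talagrand on the dual problem'' has to mean here---is to apply Talagrand not to $M(\F;Y)$ but to the integer-valued variable
\[
Z_L:=\min\{\rho(G):G\subseteq S,\ Y_G\le L\}.
\]
This $Z_L$ is genuinely $1$-Lipschitz (changing one coordinate can change $\rho$ by at most one) and $\lme(\F)$-certifiable. Talagrand at $b=0$ together with the quantile hypothesis on $L$ gives $\Pr(Z_L\ge r)\le\eps$: with probability $\ge 1-\eps$ there exists $G\in\Br$ with $Y_G\le L$. It is \emph{this} $G$ (not a trimmed optimizer $F^*$) that gets patched with the independent red weights, after which your optimization over $s$ and the derivation of the ``in particular'' part go through essentially as you wrote them.
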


The theorem gives sharp concentration if $\smallcost = o(m)$ as $N\to \infty$.

Note also that $r$ is an increasing function of $\lme(\F)$, the largest size of a member of $\F$, and $\smallcost$ in turn is a non-decreasing function of $r$. One therefore want to make $\lme(\F)$ as small as possible.

A less general version of theorem \ref{thm:concentration},  appeared in a previous paper~\cite{minHfactor} by the author and  L.~Federico. There we studied a specific family $\F$ (the family of $H$-factors in $K_n$, for some small graph $H$), for which a property very similar to patchability holds trivially. We discuss this family in section \ref{min-H-factor}.

\begin{remark}
\label{remark:bigandsmallsets}
Some families $\F$ have a large variation in the sizes of its members, and while smaller sets $F$ are more likely to have low weight $X_F$, there might be a much larger number of bigger sets in  $\F$. If the $F$ achieving the optimal weight $M(\F)$ tends to be a small set, it can sometimes be helpful to sort the members of $\F$ into two families: $\F_{small}$ with small sets and $\F_{big}$ with big sets.
If one can (with high probability) upper bound $M(\F_{small})< t$ and lower bound $M(\F_{big})> t$ with the same $t=t(n)$ (for instance by the methods discussed in sections \ref{section:ub-lb} and \ref{section:uppertail}), then with high probability $M(\F)=M(\F_{small})$ and it suffices to show concentration of $M(\F_{small})$. 
\end{remark}

\subsection{Proof strategy for theorem \ref{thm:concentration}}
This proof will follow a similar strategy to that in~\cite{minHfactor}.
The main novel ideas (here and in~\cite{minHfactor}) are (i) the patchability condition, and (ii) to apply the Talagrand inequality to a dual problem:
Setting a `budget' $\bigcost>0$, how close (in Hamming distance) to a member of $\F$ can we get while staying within budget? 
More precisely, we define
\begin{equation*}
Z_\bigcost := \min\{\rho(G): G\subseteq S \textrm{ and } X_G\leq L \}.
\end{equation*}
Talagrand's concentration inequality is much better suited to this random variable, and we use it to show that $Z_\bigcost$ is typically `small' (roughly of order $\sqrt{\lme(\F)}$) for a suitable $\bigcost$. 
That is, that there exists a $G\subseteq S$ with weight $X_G\leq \bigcost$, and which can be turned into a member of $\F$ by adding at most a small number of elements from $S$. 

For the next step, given such a $G$,
we would like to find a cheap `patch' $P\subseteq S$ such that $G\cup P\in \F$. However, here we run in to an obstacle: $G$ is now a random set, and the weights of the elements not in $G$ will not be independent from $G$, because $G$ was chosen in a way that depends on the weights $X_i$.

To get around this obstacle, we perform a trick originally due to Walkup~\cite{walkup1979expected}, which we call the \emph{red-green split}.
Split each element $x\in S$ into two, a green and a red copy. 
For some small $s>0$, give these independent random weights ${Y_i}/({1-s})^{1/q},{Y_i'}/{s}^{1/q}$, where $Y_i$ and $Y_i'$ follow the same distribution as $X_i$, and couple them to $X_i$ as in assumption $A(q)$.
With this coupling, the green weights $Y_i$ are typically close to $X_i$, while the red weights $Y'_i$ tend to be larger. Crucially, the red weights are independent from the  green weights.

We then study the dual problem on the green weights, and use Talagrand's inequality as described above to show that there probably exists a green $G\subseteq S$ with $\rho(G)\leq r$ and $Y_G\leq L$, with $r$ of order roughly $\sqrt{\lme(\F)}$. 
Next we use that $\F$ is $(r,\smallcost,\eps)$-patchable to find a red $G$-patch $P$ with $Y'_P\leq \smallcost$. Since $G\cup P\in \FC$, we have that
\begin{equation}
\label{ineq:intro-redgreen}
M(\F)\leq X_G+X_P\leq \frac{Y_G}{(1-s)^{1/q}}+\frac{Y_P'}{s^{1/q}}.
\end{equation}
Using that $Y_G\leq L$ and $Y'_P\leq \smallcost$ (with high probability), and optimizing over $s$ gives us the upper bound (\ref{ineq:UB-thm}) in theorem \ref{thm:concentration}.

\section{Applications}
\subsection{Minimum spanning tree}
\label{section:mst}
\begin{theorem}
\label{thm:mst}
Let $\mathcal{T}$ be the family of spanning trees on $K_n$. For some $q>0$, equip $K_n$ with i.i.d.\ edge weights following the distribution of the $(1/q)$:th power of a uniform $U(0,1)$ random variable.
Then there exists a constant $c_q$ such that for $\bigcost:=c_q n^{1-1/q}$, 
\[\frac{|M(\mathcal{T})-\bigcost|}{\bigcost}=\Op\Big(n^{-\frac{q}{2(q+1)}}\Big)=\op(1).\]
\end{theorem}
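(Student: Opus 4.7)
Plan. The strategy is to verify Assumption \ref{assumption:Aq} for the weight distribution, identify the bookkeeping constants $L$ and $(r,\smallcost,\eps)$ needed in Theorem \ref{thm:concentration}, and then apply that theorem. For Assumption \ref{assumption:Aq} with $X=U^{1/q}$ and $U\sim U(0,1)$: taking $V,V'\sim U(0,1)$ independent and $s\in(0,1)$, set $W:=\min\bigl(V/(1-s),\,V'/s\bigr)$; a direct computation gives $\Pr(W\le w)=w-s(1-s)w^{2}\le w$ on $[0,1]$, so $W$ stochastically dominates a $U(0,1)$ variable and one may couple $W$ with some $U\sim U(0,1)$ satisfying $U\le W$. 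Raising to the $1/q$ power yields $X\le \min\bigl(V^{1/q}/(1-s)^{1/q},\,(V')^{1/q}/s^{1/q}\bigr)$ with $V^{1/q},(V')^{1/q}$ independent copies of $X$, verifying \eqref{eq:Aq}. For the parameter $L$: the Avram--Bertsimas--Steele integral $\E[M(\mathcal{T})]=\int_{0}^{1}\bigl(\E[\#\text{comp}(G(n,t^{q}))]-1\bigr)\,dt$, localised at the critical window $p=t^{q}=\Theta(1/n)$, gives $\E[M(\mathcal{T})]\sim c_{q}n^{1-1/q}$ for an explicit constant $c_{q}$ (with $c_{1}=\zeta(3)$, recovering \cite{tree-zeta-3}); Markov's inequality then furnishes $L=L_{\eps}=O(n^{1-1/q})$ with $\Pr(M(\mathcal{T})>L)\le\eps$.

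The crux is verifying patchability. Set $r:=\lceil\sqrt{8\log(1/\eps)(n-1)}\,\rceil=\Theta(\sqrt{n})$. Given any $G\subseteq E(K_{n})$ with $\rho(G)\le r$, the graph $([n],G)$ has $k\le r+1$ components $C_{1},\dots,C_{k}$ of sizes $n_{1}\ge\cdots\ge n_{k}$ summing to $n$, and a valid $G$-patch is any spanning tree of the contracted multigraph $\bar K$ whose super-edge between nodes $i$ and $j$ comprises $n_{i}n_{j}$ parallel edges with i.i.d.\ $U^{1/q}$ weights. The plan is, using $\E[\min\text{ of }m\text{ i.i.d.\ }U^{1/q}]\le C_{q}m^{-1/q}$ for each super-edge's minimum, to bound $\E[\mathrm{Patch}(G)]=O(n^{1/2-1/q})$ uniformly over such $G$, whereupon Markov yields $(r,\smallcost,\eps)$-patchability with $\smallcost=C'_{\eps}n^{1/2-1/q}$. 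The two extremal partitions, ``one large component with $n_{1}\sim n$ plus $r$ singletons'' and ``all components of size $\sim n/k$'', both attain this target via a star from $C_{1}$; in intermediate partitions the star alone gives only the weaker $O(n^{1/2-1/(2q)})$, so a hybrid construction---first link the large components through their abundant cheap cross-edges, then attach each small component to the (growing) merged super-node---is needed to close the gap.

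Plugging these parameters into Theorem \ref{thm:concentration}, with $m$ the median of $M(\mathcal{T})$, yields
\[
\frac{|M(\mathcal{T})-m|}{m}\le C_{q}\!\left(\frac{\smallcost}{m}\right)^{\!q/(q+1)}=O\!\left(\left(\tfrac{n^{1/2-1/q}}{n^{1-1/q}}\right)^{\!q/(q+1)}\right)=O\!\left(n^{-q/(2(q+1))}\right),
\]
and identifying $c_{q}:=\lim_{n}m/n^{1-1/q}$ (which equals $\lim_{n}\E[M(\mathcal{T})]/n^{1-1/q}$ by the resulting concentration) gives the statement. The main obstacle is the uniform patch-cost bound across all admissible component profiles $(n_{1},\dots,n_{k})$ with $k\le r+1$: the naive star from $C_{1}$ suffices on both extremes but not in between, so the hybrid multi-stage construction above, together with a careful case split on the tail of the size sequence, must be carried through to attain the sharp rate $n^{1/2-1/q}$.
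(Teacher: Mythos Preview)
Your overall framework matches the paper exactly: verify $A(q)$, pin down $L=\Theta(n^{1-1/q})$, establish $(r,\smallcost,\eps)$-patchability with $r=\Theta(\sqrt n)$ and $\smallcost=O(n^{1/2-1/q})$, and feed this into Theorem~\ref{thm:concentration}. The only substantive gap is precisely the one you flag yourself: the uniform bound $\E[\mathrm{Patch}(G)]=O(n^{1/2-1/q})$ over all component profiles. You correctly diagnose that the star from the largest component fails in the intermediate regime (when the largest component has only $\Theta(\sqrt n)$ vertices and there are many singletons, the star edges number only $\Theta(\sqrt n)$, giving a per-edge cost $\Theta(n^{-1/(2q)})$ rather than $\Theta(n^{-1/q})$), and you propose a hybrid large-then-small scheme without carrying it out.

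The paper closes this gap with a cleaner device that sidesteps your case split entirely. Sort the components $C_1,\dots,C_{r+1}$ by \emph{increasing} size, and for each $i\le r$ let $W_i$ be the minimum weight over all edges from $C_i$ to $\bigcup_{j>i}C_j$ (not just to $C_1$). These ``outgoing'' edge sets are pairwise disjoint, so the $W_i$ are independent, and the union of the selected edges is a $G$-patch because every $C_i$ is linked to some later $C_j$. The key observation is that each $C_i$ has $\Theta(n)$ outgoing edges regardless of the profile: if $|C_i|=k\le n/(2r)$ then $C_1,\dots,C_i$ together contain at most $rk\le n/2$ vertices, leaving at least $n/2$ vertices in $C_{i+1}\cup\cdots\cup C_{r+1}$ and hence at least $n/2$ outgoing edges; if $k>n/(2r)$ then $|C_{i+1}|\ge k$ too, giving at least $k^2>n^2/(4r^2)=\Theta(n)$ edges already between $C_i$ and $C_{i+1}$. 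Either way $\E[W_i]=O(n^{-1/q})$ uniformly, and $\sum_{i\le r}W_i=O_{\Pr}(rn^{-1/q})=O_{\Pr}(n^{1/2-1/q})$ follows directly from independence. This single two-line claim replaces your proposed multi-stage hybrid and delivers the required $\smallcost$ without any residual case analysis.
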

In particular, for $q=1$, it is known that $\E[M(\mathcal{T})]\to\zeta(3)\approx 1.202$, and  $\Var[M(\mathcal{T})]=\Theta(1/n)$, so that the fluctuations of $M(\mathcal{T})$ around its expected value is of order $n^{-1/2}$. Our theorem gives a weaker upper bound, of order $n^{-1/4}$.

For  $q\geq 1$ the theorem follows from \cite[Section 5]{frieze-spanningtree-cost-and-weight}, but it seems to be novel for $q<1$.

\begin{proof}
In \cite{tree-zeta-3} it is shown that $\E[M(\mathcal{T})]\to \zeta(3)$ when $q=1$. Assuming that the edge weights are such that $X_i^q\sim U(0,1)$, it is easy to adapt this argument to show that $\E[M(\mathcal{T})]/n^{1-1/q}$ converges to a constant as $n\to\infty$.

We will prove that $\patch(G)=\Op(rn^{-1/q})$ for any $G$ with $\holes(G)=r$. The theorem then follows by plugging this into theorem \ref{thm:concentration}, by noting that (i) since a spanning tree has $n-1$ edges, $r=\Theta(\sqrt{n})$ (for $\eps$ fixed), and (ii) sharp concentration of $M(\mathcal{T})$ around its median implies that the expected value is close to the median.

Pick any $G$ with $\holes(G)=r=\Theta(\sqrt{n})$.
A graph with $\holes(G)=r$ has $r+1$ connected components, as $r$ edges must be added to it in order to connect it. 
Let $C_1,\ldots, C_{r+1}$ be these connected components, sorted in increasing order by their number of vertices, and with ties broken arbitrarily.
For each edge in $E(K_n)-E(G)$ that goes between two components, orient it according to the order of the components above: from $C_i$ to $C_j$ when $i<j$.
We will find a $G$-patch $P$ by, for each $1\leq i\leq r$, picking the cheapest outgoing edge from $C_i$. Note that such a $P$ is indeed a $G$-patch, because $\langle\mathcal{T}\rangle$ is the family of connected graphs, and there is a path in $G\cup P$ from any $C_i$ ($i\leq r$) to $C_{r+1}$. 
\begin{claim}
\label{claim:patch-trees}
For $1\leq i\leq r$, $C_i$ has at least $s:=\min(n/2,n^2/4r^2)=\Theta(n)$ outgoing edges. 
\end{claim}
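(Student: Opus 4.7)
Let $a_j := |C_j|$, so that $a_1 \le a_2 \le \dots \le a_{r+1}$ and $\sum_{j=1}^{r+1} a_j = n$. My first step is to observe that every edge of $K_n$ between two distinct components $C_i, C_j$ of $G$ lies outside $E(G)$ (otherwise $C_i$ and $C_j$ would be joined in $G$). Hence, by the prescribed orientation, the outgoing edges from $C_i$ are exactly those connecting $C_i$ to some $C_j$ with $j > i$, and their number equals
\[
a_i \cdot \sum_{j > i} a_j.
\]

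From here I would split into two cases, depending on whether $a_i \le n/(2r)$ or $a_i > n/(2r)$; the threshold $n/(2r)$ is chosen precisely so that the bounds coming from the two cases meet. In the first case, the monotonicity of the $a_j$ gives $\sum_{j \le i} a_j \le i\, a_i \le r \cdot n/(2r) = n/2$ (using $i \le r$), so $\sum_{j > i} a_j \ge n/2$, and the outgoing count is at least $a_i \cdot n/2 \ge n/2$ (using $a_i \ge 1$). In the second case, monotonicity again yields $\sum_{j > i} a_j \ge a_{i+1} \ge a_i > n/(2r)$ (this is where $i \le r$ is used again, to ensure $C_{i+1}$ exists), so the outgoing count exceeds $(n/(2r))^2 = n^2/(4r^2)$. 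Combining the two cases gives at least $\min(n/2, n^2/(4r^2)) = s$ outgoing edges, as claimed; and since $r = \Theta(\sqrt{n})$, both arguments of the minimum are $\Theta(n)$, so $s = \Theta(n)$.

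The argument is elementary, so I do not foresee a serious obstacle. The only delicate point is selecting the threshold $n/(2r)$ that equalizes the two case bounds, and remembering that the hypothesis $i \le r$ enters in both cases (to bound $i\, a_i$ in the first, and to guarantee the existence of $C_{i+1}$ in the second).
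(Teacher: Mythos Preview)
Your proof is correct and follows essentially the same approach as the paper: the paper also sets $k=|C_i|$, splits into the cases $k\le n/(2r)$ and $k>n/(2r)$, and in each case bounds the outgoing edges exactly as you do. If anything, your write-up is slightly more explicit (noting $a_i\ge 1$ and that $i\le r$ guarantees $C_{i+1}$ exists).
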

\begin{proof}
Consider a component $C_i$ with $k$ vertices. If $k\leq n/2r$, then $C_1,\ldots ,C_i$ all have at most $k$ vertices each, for a total of at most $rk\leq n/2$ vertices. Hence there are at least $n/2$ total vertices in $C_{i+1}\cup  \ldots \cup C_{r+1}$, and every vertex in $C_i$ has at least this many outgoing edges. If instead $k>n/2r$, $C_{i+1}$ also has at least $k$ vertices, so there are at least $k^2>n^2/4r^2$ edges from $C_i$ to $C_{i+1}$.
\end{proof}

Let $W_i$ be the minimum weight of an outgoing edge from $C_i$. The $W_i$'s are independent. 
The minimum of $m$ i.i.d. edge weights $X_i$ such that $X_i^q\sim U(0,1)$ has expected value and standard deviation of order $\Theta(m^{-1/q})$.
Hence the $W_i$'s have expected value and standard deviation uniformly bounded by some $O(n^{-1/q})$, and there exists a constant $c>0$ such that with high probability \[\patch(G)\leq \sum_{i=1}^r W_i \leq crn^{-1/q}.\]
Hence $\mathcal{T}$ is $(\smallcost,r,\eps)$-patchable with $\smallcost =crn^{-1/q}$ and $r=\Theta(\sqrt{n})$.
By theorem \ref{thm:concentration} we have that $|M(\mathcal{T})-m|/m\leq \Op((\smallcost/m)^{\frac{q}{q+1}})$. Here $m=\Theta(n^{1-1/q})$ and $\smallcost=O(n^{\frac{1}{2}-1/q})$, so that $\smallcost/m = O(r/n)=O(n^{-\frac{1}{2}})$. Hence $|M(\mathcal{T})-m|/m  \leq \Op(n^{-\frac{q}{2(q+1)}})=\op(1)$.
\end{proof}

\subsection{Minimum $H$-factor}
\label{min-H-factor}
Given a fixed graph $H$, an $H$-factor (or tiling) on $K_n$ is a collection of vertex-disjoint copies of $H$, which together cover all vertices of $K_n$. For $H=K_2$, this is the random assignment (also known as minimum perfect matching) problem which we discuss in the next subsection.
 In a paper by the author and L. Federico~\cite{minHfactor}, an earlier version of theorem \ref{thm:concentration} was used to show sharp concentration of the minimum weight of an $H$-factor for graphs $H$ containing at least one cycle.

In the minimum $H$-factor problem, patches have a particularly nice structure: They are essentially $H$-factors on smaller vertex sets. If $F$ is an $H$-factor and we remove $r$ edges, we may as well remove the (at most) $r$ copies of $H$ these edges belonged to. This leaves a partial $H$-factor $G$, and any $H$-factor on the at most $r\cdot v(H)$ uncovered vertices forms a $G$-patch.

For graphs $H$ containing at least one cycle (and random weights satisfying $A(1)$, such as $U(0,1)$), we showed that the minimum weight $M$ of an $H$-factor is of order $\Tp(n^{\beta})$ w.h.p., for some $\beta=\beta(H)\in (0,1)$ . This immediately implies that $H$-factors are $(r,\lambda,\eps)$-patchable where $\lambda =O(r^{\beta})$. When applying theorem \ref{thm:concentration}, $r$ is of order $\sqrt{n}$, so that $\lambda$ is of order $n^{\beta/2}$. Since the median $m$ of $M$ is of order $m=\Theta(n^\beta)$, the theorem gives us that $|M-m| \leq \Op(\sqrt{\lambda m})= \Op(m^{\frac{3}{4}})$. In other words, $M$ is sharply concentrated.

\subsection{Random assignment}
In the random assignment problem, $\F$ is the set of perfect matchings on the complete bipartite graph $K_{n,n}$.
This problem has been studied when the edge weights satisfy condition $A(q)$ in (\ref{eq:Aq}), for $q=1$~\cite{matching-zeta-2}, $q>1$~\cite{wastlundmatching} and by the present author for $q<1$~\cite{matching-larsson}. In all cases it has been shown that $M(\F)/n^{1-1/q}$ converge in probability to a constant depending only on $q$.

However, a straight-forward application of theorem \ref{thm:concentration} only gives sharp concentration in the case $q>1$. Although a perfect matching is also an $H$-factor (with $H=K_2$), the minimum weight scales like $n^{1-1/q}$, and a similar argument to that in section \ref{min-H-factor} only leads to sharp concentration if the exponent is positive.

\subsection{Minimum spanning $d$-sphere}
A combinatorial $d$-sphere is a $(d+1)$-regular hypergraph, which -- when viewed as the set of maximal faces of an abstract simplicial complex -- is homeomorphic to a $d$-sphere. In an upcoming paper joint with A. Georgakopoulos and J. Haslegrave, we study the minimum weight of a spanning $d$-sphere in a randomly-weighted complete $(d+1)$-uniform hypergraph $K_{n}^{(d+1)}$~\cite{min-d-sphere}, again with weights satisfying $A(1)$. We show concentration of this minimum weight for $d=2$ and $3$, and the proof for $d=2$ uses theorem \ref{thm:concentration}.

\section{Proofs}
\label{proofs}
\label{section:concentration}

To prove theorem \ref{thm:concentration} we will need the following lemma, which is where the `red-green split trick' is used. 
Recall that for any $r\geq 0$, $\Br$ is the set of $G\subset S$ within Hamming distance $r$ of $\FC$, i.e.\ such that there exists a $P\subseteq S$ with $|P|\leq r$ and $G\cup P\in \FC$.
\begin{lemma}
Assume $a,b>0$ and pick $c$ such that $c^{\frac{q}{q+1}}=a^{\frac{q}{q+1}}+b^{\frac{q}{q+1}}$.
 Let $G^*$ be the (random) set in $\Br$ with minimal cost, i.e.\ $W_{G^*}=M(\Br)$. Then
\label{redgreensplit}
\begin{align}
   \Pr\big(M(\F)>c\big) \,&\leq \,\Pr\big(M(\Br)>a\big)+\Pr\big(\mathrm{Patch}(G^*)>b\big) \nonumber
   \\
   &\leq  \, \Pr\big(M(\Br)>a\big)+\max_{G\in \Br}\Pr\big(\mathrm{Patch}(G)>b\big).
   \label{ineq:redgreensplit}
\end{align}
\end{lemma}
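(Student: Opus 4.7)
The plan is to execute the red-green split outlined in the proof strategy, recasting the right-hand side of (\ref{ineq:redgreensplit}) in a coupled probability space whose marginals match the original one. Fix a parameter $s \in (0,1)$ to be optimized later. For each $i \in S$, invoke \cref{assumption:Aq} to couple $X_i$ to independent copies $Y_i, Y_i' \stackrel{d}{=} X_i$ with $X_i \leq \min\!\big(Y_i/(1-s)^{1/q},\, Y_i'/s^{1/q}\big)$ almost surely; doing this independently across $i$ yields two independent i.i.d.\ families $(Y_i)$ (the green weights) and $(Y_i')$ (the red weights), each distributed like $(X_i)$.

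Next, let $\tilde G$ minimize $Y_G$ over $G \in \Br$, so $Y_{\tilde G} \stackrel{d}{=} M(\Br)$, and note that $\tilde G$ is measurable with respect to $(Y_i)$, hence independent of $(Y_i')$. Let $\tilde P$ be a $\tilde G$-patch minimizing $Y_P'$. Since $\tilde G \cup \tilde P \in \FC$, the coupling gives
\[
M(\F) \;\leq\; X_{\tilde G}+X_{\tilde P} \;\leq\; \frac{Y_{\tilde G}}{(1-s)^{1/q}}+\frac{Y_{\tilde P}'}{s^{1/q}}.
\]
On the event $\{Y_{\tilde G} \leq a,\; Y_{\tilde P}' \leq b\}$, the right-hand side is at most $a(1-s)^{-1/q}+bs^{-1/q}$. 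A one-variable calculus check shows that $s^\star = b^{q/(q+1)}/\big(a^{q/(q+1)}+b^{q/(q+1)}\big)$ minimizes this expression and attains exactly the value $c=(a^{q/(q+1)}+b^{q/(q+1)})^{(q+1)/q}$. Hence this event is contained in $\{M(\F)\leq c\}$, so a union bound gives
\[
\Pr(M(\F)>c) \;\leq\; \Pr(Y_{\tilde G}>a)+\Pr(Y_{\tilde P}'>b).
\]

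The first term on the right equals $\Pr(M(\Br)>a)$ since $(Y_i) \stackrel{d}{=} (X_i)$. For the second, condition on $\tilde G$: since $(Y_i')$ is independent of $\tilde G$ and distributed like $(X_i)$, the conditional law of $Y_{\tilde P}'$ given $\tilde G = G$ is that of $\mathrm{Patch}(G)$ in the original problem. Averaging over $\tilde G$ yields the first line of (\ref{ineq:redgreensplit}), while bounding the conditional probability uniformly over $G \in \Br$ yields the second. The main subtlety in all of this is the identification of the random set $G^*$ in the statement with $\tilde G$: these coincide only in distribution (not pointwise), so the quantity $\Pr(\mathrm{Patch}(G^*)>b)$ should be read as the patch-probability on an independent fresh copy of the weights, averaged over the distribution of the minimizer in $\Br$ -- which is precisely what the coupling produces. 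Once this reading is accepted, the remainder is routine: an $A(q)$ coupling, a one-variable optimization, and an independence-based conditioning.
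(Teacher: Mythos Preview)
Your proof is correct and follows essentially the same route as the paper's: the $A(q)$ coupling, the green minimizer $\tilde G\in\Br$, the red patch, the union bound, and the optimization over $s$ (which the paper delegates to Claim~\ref{claim:ab-min}) all match. Your explicit remark on how $\Pr(\mathrm{Patch}(G^*)>b)$ must be read---as the patch cost in a fresh independent copy of the weights, averaged over the law of the minimizer---is a helpful clarification of a point the paper leaves somewhat implicit in the statement.
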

We will also need the following claim.
\begin{claim}
\label{claim:ab-min}
Let $f(s):=\frac{a}{(1-s)^p}+\frac{b}{s^p}$, with $a\geq b>0$ and $p\geq 0$. Then $f$ has a unique minimum $s_0$ on $(0,1)$, with
\[
f(s_0)= (a^{\frac{1}{p+1}}+b^{\frac{1}{p+1}})^{p+1}\leq a\cdot \big(1+C\cdot (b/a)^{\frac{1}{p+1}}\big),
\]
for some constant $C=C(p)$. In particular if $a\gg b$, $f(t)=a \cdot (1+o(1))$.
\end{claim}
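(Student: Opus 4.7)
I will prove \cref{claim:ab-min} by straightforward calculus, treating the three assertions (unique minimum, closed form, and upper bound) in turn.

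First, I will compute the critical point. Since $p \geq 0$ and $a,b > 0$, the second derivative
\[
f''(s) = \frac{p(p+1)a}{(1-s)^{p+2}} + \frac{p(p+1)b}{s^{p+2}}
\]
is strictly positive on $(0,1)$ whenever $p > 0$, so $f$ is strictly convex and has at most one minimum; moreover $f(s) \to \infty$ as $s \to 0^+$ or $s \to 1^-$, giving existence. (The case $p = 0$ is degenerate, with $f \equiv a+b$, and the claimed formula still holds trivially.) Setting
\[
f'(s_0) = \frac{pa}{(1-s_0)^{p+1}} - \frac{pb}{s_0^{p+1}} = 0
\]
and solving yields $s_0/(1-s_0) = (b/a)^{1/(p+1)}$, so
\[
s_0 = \frac{b^{1/(p+1)}}{a^{1/(p+1)}+b^{1/(p+1)}}, \qquad 1-s_0 = \frac{a^{1/(p+1)}}{a^{1/(p+1)}+b^{1/(p+1)}}.
\]

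Second, I will evaluate $f$ at $s_0$. Plugging in and grouping the common factor $(a^{1/(p+1)}+b^{1/(p+1)})^{p}$ gives
\[
f(s_0) = \bigl(a^{1/(p+1)}+b^{1/(p+1)}\bigr)^{p}\cdot \bigl(a\cdot a^{-p/(p+1)} + b\cdot b^{-p/(p+1)}\bigr) = \bigl(a^{1/(p+1)}+b^{1/(p+1)}\bigr)^{p+1},
\]
since $a \cdot a^{-p/(p+1)} = a^{1/(p+1)}$ and similarly for $b$. This is the claimed closed form.

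Finally, to obtain the upper bound, I factor out $a$:
\[
f(s_0) = a\cdot \bigl(1 + (b/a)^{1/(p+1)}\bigr)^{p+1}.
\]
Let $x := (b/a)^{1/(p+1)}$; since $a \geq b > 0$, $x \in (0,1]$. The map $x \mapsto (1+x)^{p+1}$ is convex on $[0,1]$ (for $p \geq 0$), so it lies below the chord joining its values at $x=0$ and $x=1$:
\[
(1+x)^{p+1} \leq 1 + (2^{p+1}-1)\,x \quad \text{for } x\in [0,1].
\]
Setting $C := 2^{p+1}-1$ gives $f(s_0) \leq a\bigl(1 + C\cdot (b/a)^{1/(p+1)}\bigr)$. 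The ``in particular'' assertion is then immediate: if $a \gg b$ then $(b/a)^{1/(p+1)} \to 0$, so $f(s_0)/a \to 1$, i.e.\ $f(s_0) = a(1+o(1))$. None of the steps is an obstacle; the only mild subtlety is keeping track of the exponents when simplifying $f(s_0)$ into the symmetric closed form.
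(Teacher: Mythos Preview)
Your proof is correct and follows essentially the same approach as the paper: establish uniqueness via convexity, solve $f'(s_0)=0$ to get $s_0 = b^{1/(p+1)}/(a^{1/(p+1)}+b^{1/(p+1)})$, plug in to obtain the closed form, and then bound $(1+x)^{p+1}$ by the secant line on $[0,1]$ with $C=2^{p+1}-1$. Your version is slightly more explicit (computing $f''$ and handling the degenerate case $p=0$), but the argument is the same.
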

We postpone the proofs of lemma \ref{redgreensplit} and claim \ref{claim:ab-min} until the end of this section.
Now, let's instead proceed with the proof of our main theorem.
\begin{proof}[Proof of theorem \ref{thm:concentration}]
We want to upper bound the probability that $M(\F)$ is large. To do this, we will use lemma \ref{redgreensplit} with $a=\bigcost,b=\smallcost$.
\begin{equation} 
\label{eq:redgreen-Ll}
\Pr\big(M(\F)>c\big)  \leq  \, \underbrace{\Pr\big(M(\Br)>\bigcost\big)}_{\textrm{Upper bound by Talagrand's inequality}}+\underbrace{\max_{G\in \Br}\Pr\big(\mathrm{Patch}(G)>\smallcost\big)}_{\textrm{Upper bound by using patchability}}
\end{equation}
For the second term of the right-hand side of (\ref{eq:redgreen-Ll}), we use the patchability condition: By assumption, $\F$ is $(r,\smallcost,\eps)$-patchable, or in other words $\mathrm{Patch}(G)>\smallcost$ with probability at most $\eps$ for all $G\in\Br$.

Recall that \(Z_\bigcost:=\min\{\holes(G): X_G\leq \bigcost\}\),
and note that $M(\Br)>\bigcost$ if and only if $Z_\bigcost>r$.

We now want to apply the Talagrand inequality to the first term on the right-hand side of (\ref{eq:redgreen-Ll}). The way this inequality is stated in~\cite{probmeth}, we would need to apply it to the random variable $-Z_\bigcost$. However, for the sake of clarity and to avoid a clutter of minus signs, we reformulate the inequality and the definition of certifiability so that they apply directly to $Z_L$. The random variable $Z_L$ has the following two properties:
\begin{description}
\item[$Z_\bigcost$ is $1$-Lipschitz:] Suppose $\omega,\omega'\in \Omega$ are such that $X_i(\omega)=X_i(\omega')$, for all $i$ except some $i_0$. Consider $G\subseteq S$ such that $X_G(\omega)\leq \bigcost$ and $\rho(G)$ attains the minimum $Z_\bigcost(\omega)$. Then $G':=G-\{i_0\}$ satisfies $X_{G'}(\omega')=X_{G'}(\omega)\leq L$, and $\rho(G') $ is at most $\rho(G)+1$, so that $Z_\bigcost(\omega')\leq Z_\bigcost(\omega)+1$. By interchanging $\omega$ and $\omega'$, $|Z_\bigcost(\omega')- Z_\bigcost(\omega)|\leq 1$.

\item[$Z_\bigcost$ is $\lme(\F)$-certifiable:] If $\omega$ is such that $Z_\bigcost(\omega)\leq s$, there exists a $G$ with $X_G(\omega) \leq \bigcost$ and $\holes(G)\leq s$. Assuming WLOG that $G$ is a minimal such set, it has at most $\lme(\F)$ elements. These are a certificate that $Z_\bigcost\leq s$: any $\omega'$ which agrees with $\omega$ on the set $G$ has $X_G(\omega')=X_G(\omega)\leq \bigcost$, and hence $Z_L(\omega')\leq s$ too.
 \end{description}
The Talagrand inequality then states that for any $t>0$ and $b$,
\begin{equation}
    \label{ZL:talagrand}
    \Pr\left(Z_\bigcost\leq b\right) \cdot  \Pr\left(Z_\bigcost\geq b+t\sqrt{\lme(\F)}\right) \leq e^{-t^2/4}.
\end{equation}
Let $t:=\sqrt{8\log (\eps^{-1})}$, so that $e^{-t^2/4}=\eps^2$ and $r\geq t\sqrt{\lme(\F)}$, and let $b:=0$.
The first probability on the left-hand side of (\ref{ZL:talagrand}) is $\Pr(Z_\bigcost=0)=\Pr(M(\F)\leq \bigcost)$, which is at least $\eps$ by assumption. Hence the second probability is $\Pr(Z_\bigcost\geq r)\leq \eps$.

The first term on the right-hand side of (\ref{eq:redgreen-Ll}) is then
$\Pr(M(\Br)>\bigcost)=\Pr(Z_\bigcost>r)\leq \eps$, and hence $\Pr(M(\F)>c)\leq 2\eps$.

The `in particular'-statement follows from the second part of claim \ref{claim:ab-min} with $p=q^{-1}$, $a=L$, and $b=\lambda$: For some constant $C$, $c=(\bigcost^{\frac{q}{q+1}}+\smallcost^{\frac{q}{q+1}})^{\frac{q+1}{q}}\leq \bigcost
\cdot (1+C\cdot (\smallcost/\bigcost)^{\frac{q}{q+1}})$. Then $M(\F)> c$ with probability at most $2\eps$, and $M(\F)< L$ with probability at most $\eps$ by assumption. Hence $M(\F)$ lies within an interval of length $LC\cdot (\lambda/L)^{\frac{q}{q+1}}$ with probability $1-3\eps$, and in particular the median also lies in this interval.
\end{proof}

\begin{proof}[Proof of lemma \ref{redgreensplit}]
For convenience, set $p=q^{-1}$.
For a small $s>0$ to be chosen later, use assumption $A(1)$ in assumption \ref{assumption:Aq} to couple the weights $X_i$ to a pair of independent random variables $Y_i,Y_i'\overset{d}{=}X_i$ such that $X_i\leq {Y_i}/({1-s})^p+{Y_i'}/{s^p}$. We think of the $Y_i$ as `green' weights and the $Y'_i$ as `red' weights.
Using the coupling of $Y_i,Y_i'$ in (\ref{eq:Aq}) gives that for any $F=G\cup P\in \FC$, surely
\[
M(\F)\leq X_F\leq X_G+X_P\leq \frac{Y_G}{(1-s)^p}+\frac{Y'_P}{s^p}.
\]
Our strategy is now to find a cheap green $G$ with $\rho(G)\leq r$, and then find a cheap red $G$-patch $P$.
Let $W$ be the cost of the cheapest such $G$ w.r.t.\ the weights $Y_i$, or in other words $W :={\min\{Y_G:G\in \Br\}}$, and let $G^*$ be the random set $G$
which achieves this minimum.
Similarly, for any $G\in \Br$, let the random variable $W'(G)$ be the minimum of $Y'_P$ over all $G$-patches $P$. Then
\begin{equation}
\label{ineq:M-Xgreen-Xred}
M(\F)\leq \frac{W}{(1-s)^p}+\frac{W'(G^*)}{s^p}.
\end{equation}
Now,  (\ref{ineq:M-Xgreen-Xred}) is at most $\frac{a}{(1-s)^p}+\frac{b}{s^p}$, 
unless $W>a$ or $W'(G^*)>b$. 
Note that since $Y_i,Y'_i$ follow the same distrubition as $X_i$, ${W}\overset{d}{=}M(\Br)$ and ${W'}(G)\overset{d}{=}\mathrm{Patch}(G)$.
Hence the right-hand side of (\ref{ineq:redgreensplit}) is $\alpha + \beta$, with
\begin{align*}
    \alpha:=&\Pr(M(\Br)>a)=\Pr(W>a),
    \\
    \beta:=&\max_{G\in \Br} \Pr(\mathrm{Patch}(G)>b)=\max_{G\in \Br} \Pr(W'(G)>b).
\end{align*}
For the second termof the right-hand side of (\ref{ineq:M-Xgreen-Xred}), by the choice of $\beta$, $\Pr\left(W'(G)>b\right)\leq \beta$ for any $G$.
By averaging, it also holds for $G$ picked according to a any probability distribution on $\Br$ which is independent from the red weights. 
In particular, it holds if $G^*$ is the $G$ that achieves the minimum $W=\min_{G\in \Br}Y_G$, since $G^*$ only depends on the green weights. Hence $\Pr\left(W'(G^*)>b\right)\leq \beta$.
So by a union bound on the right-hand side of (\ref{ineq:M-Xgreen-Xred}),
\begin{equation}
\Pr\left(M(\F)> \frac{a}{(1-s)^p}+\frac{b}{s^p}\right)\leq \alpha + \beta.
\label{ineq:M-redgreenbound}
\end{equation}
But since $s\in (0,1)$ was arbitrary, we can minimize $\frac{a}{(1-s)^p}+\frac{b}{s^p}$ over $s$. 

Noting that $(a^{\frac{1}{p+1}}+b^{\frac{1}{p+1}})^{p+1}=(a^{\frac{q}{q+1}}+b^{\frac{q}{q+1}})^{\frac{q+1}{q}}=c$, the lemma follows.
\end{proof}

\begin{proof}[Proof of claim \ref{claim:ab-min}]
Since $f$ is smooth and strictly convex on $(0,1)$, there exists a unique local minimum $s_0$, which is also the global minimum.
Let $u:=a^{\frac{1}{p+1}},v:=b^{\frac{1}{p+1}}$.
Then $f'(s) = {-p\big(\frac{u^{p+1}}{(1-s)^{p+1}}-\frac{v^{p+1}}{s^{p+1}}\big)}$, which is zero iff $\frac{u}{1-s}=\frac{v}{s}$. Solving for $s$ gives $s_0:=\frac{v}{u+v}$, so that  $f(s_0)={(u+v)^{p+1}}$.

The function $\varphi(x)=(1+x)^{p+1}$ is convex, and hence it lies below the secant line with intersections at $x=0$ and $x=1$. This secant line has slope $C:=\varphi(1)-\varphi(0)=2^{p+1}-1$. In other words, $(1+x)^{p+1}\leq 1+Cx$ for any $x\in [0,1]$. With $x:=v/u\leq 1$, we get $(u+v)^{p+1}\leq u^{p+1}(1+Cv/u)$.
\end{proof}

\section{Other weight distributions}
\label{section:mindominant}
Assumption \ref{assumption:Aq} is closely related to the so-called \emph{pseudo-dimension} of a distribution. If the cdf $F$ of $X$ is such that, for some $d>0$, $F(x)/x^d$ converges to some $c\in (0,\infty)$ as $x\to 0$, then $X$ is said to be of pseudo-dimension $d$. 
The motivation behind the name is that if $d$ is a positive integer and two points are chosen uniformly at random from the $d$-dimensional unit box, the distribution of the Euclidean distance between these points is of pseudo-dimension $d$.

Since pseudo-dimension is only a condition on the behavior of $F(x)$ near $0$, we cannot guarantee that assumption $A(q)$ will hold. It is, however, often the case that the distribution of $M(\F)$ is (asymptotically) the same for any weights of pseudo-dimension $q$, up to a global rescaling. If one has weights $X_i$ of pseudo-dimension $q$ but not satisfying $A(q)$, it is usually easiest to first show that one can approximate these with e.g. the $(1/q)$:th power of $U(0,1)$-distributed random variables, and then apply theorem \ref{thm:concentration}.

We will now briefly outline one potential strategy to do this, using a variant of the patchability condition. Start with the $F$ achieving optimality ($X_F=M(\F)$). Remove expensive elements (weight $\geq \delta$ for some $\delta>0$) from $F$, resulting in some $G$ with $\holes(G)$ fairly small. Rerandomize the edge weights, and search for a $G$-patch which is both cheap, \emph{and} uses no edge of cost above $\delta$. If (whp) such a patch can be found, then one can show that there is an $F'\in \F$ using only cheap elements, and with $X_{F'}$ very close to $M(\F)$. Since $F'$ only uses cheap elements, we can couple the weights of pseudo-dimension $q$ to, for instance, the $(1/q)$:th power of $U(0,1)$-weights, and thereby show that $M(\F)$ with pseudo-dimension $q$ weights can be well approximated by $M(\F)$ with weights satisfying $A(q)$.

For an example of a proof following this strategy, see theorem 5.1 in~\cite{minHfactor}.

\section{Bounds on $M(\F)$}
\label{section:ub-lb}
As noted in section \ref{section:theinequality}, $M(\F)$ is sharply concentrated if $\F$ is $(r,\smallcost,\eps)$-patchable with $\eps$ small, $r$ at least of order $\sqrt{\lme(\F)}$, and $\smallcost =o(m)$ as $N\to\infty$ (where $m$ is the median of $M(\F)$). To verify that $\smallcost =o(m)$, one typically needs a lower bound on $m$. In section \ref{section:lowerbound} we provide a generic first moment method bound, which in practice often turns out to be within a constant factor of the true $m$.

\subsection{Upper bound}
While not strictly necessary to prove sharp concentration, one is usually also interested in finding a matching upper bound on $m$ (or $M(\F)$).
 These tend to require an approach tailored to the specific family $\F$ one is studying. Here are some examples of approaches that have been successful in the past.
 \begin{enumerate}
    \item For all $F\in \F$, $M(\F)\leq X_F$. Any algorithm for finding an $F$ with low weight will give an upper bound on $M(\F)$, and even fairly naive algorithms (e.g.\ greedy algorithms) can often be within a constant factor of optimal. Or, in the case of minimum spanning tree, actually optimal.
    \item For any $\F\subseteq \F$, $M(\F)\leq M(\F')$. Sometimes one can find such a $\F'$ which is significantly easier to analyse, but which still has $M(\F')$ fairly close to $M(\F)$. For instance, see remark \ref{remark:bigandsmallsets}.
    \item In a recent breakthrough paper, Frankston, Kahn, Narayanan and Park~\cite{fractional-exp-thresh} gives an upper bound on $M(\F)$ (and the corresponding threshold problem) in terms of the so-called \emph{spread} of $\F$. A family $\F$ is said to be $\kappa$-spread if no $r$-set $G\subset S$ occur as a subset in a more than a fraction $\kappa^{-r}$ of the members of $\FC$.\footnote{This is similar to the intuition that a function should not depend `too much' on any small set of coordinates. It would be interesting to see if there are any connections between the spread and the patchability of a family $\F$.}
    \item If one has shown that $\F$ is $(r,\smallcost,\eps)$-patchable for some not too large $r$ and $\smallcost$, then it suffices to upper bound $M(\Br)$ (the minimal cost of a $G$ within Hamming distance $r$ of $\FC$). In the case of $H$-factors which we discuss in section \ref{min-H-factor}, $M(\Br)$ was essentially already known for $r=\delta n$ for any fixed $\delta >0$.
\end{enumerate}

\subsection{Lower bound on $M(\F)$}
\label{section:lowerbound}

In this section we provide a general lemma that gives a lower bound on $M(\F)$ given some bound on the size of $\F$ and the sizes of members of $\F$. For brevity, we do this only for weights $X_i$ with distribution given by the $1/q$:th power of a $U(0,1)$-random variable. \todo{For other distributions satisfying $A(q)$?}

\begin{lemma}
\label{LB:general-set-family}
Let $\F$ be a family of subsets of $S$ such that each set $F\in\F$ has $\ell_0\leq |F|\leq \ell_1$ elements for some $\ell_0,\ell_1 >0$.
Assume $|\{ F\in \F:|F|=m\}| \leq c^m m^{\beta m }$ for some constants $c,\beta > 0$ and all $m$.

Then for any $t>0$ there exists a constant $c'>0$ such that $M(\F)$ (with respect to i.i.d.\ weigths $X_i$ satisfying $X_i^q\sim U(0,1)$) is at least $c'\cdot \min(\ell_0^{1-\beta/q},\ell_1^{1-\beta/q})$ with probability at least $1-\exp({- \ell_0 t})$.
\end{lemma}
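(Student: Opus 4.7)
My plan is a standard first moment (union bound) argument. For a threshold $T>0$ to be chosen later,
\[
\Pr\big(M(\F)\leq T\big)\leq \sum_{m=\ell_0}^{\ell_1} \big|\{F\in\F:|F|=m\}\big|\cdot \max_{|F|=m}\Pr\big(X_F\leq T\big).
\]
The first factor is bounded by hypothesis by $c^m m^{\beta m}$, so the work is to estimate $\Pr(X_F\leq T)$ for a fixed $F$ of size $m$. Since $X_i^q\sim U(0,1)$, each $X_i$ has density $q x^{q-1}$ on $[0,1]$, and dropping the upper constraint $x_i\leq 1$ only enlarges the integral.

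Thus I would bound $\Pr(X_F\leq T)$ above by the Dirichlet-type integral $\int_{\sum x_i\leq T,\; x_i\geq 0} q^m\prod x_i^{q-1}\,dx$, which evaluates in closed form to $T^{qm}\Gamma(q+1)^m/\Gamma(qm+1)$. Applying Stirling's formula $\Gamma(qm+1)\geq c_0(qm/e)^{qm}$, this simplifies (absorbing $q$-dependent constants) to $K_0\cdot\left(K T^q/m^q\right)^m$ for some $K_0, K$ depending only on $q$. Multiplying by the cardinality bound gives
\[
\big|\{F\in\F:|F|=m\}\big|\cdot\max_{|F|=m}\Pr\big(X_F\leq T\big)\leq K_0\left(\frac{K' T^q}{m^{q-\beta}}\right)^{m}
\]
with $K':=cK$. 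I would then choose $T=c' \cdot \min(\ell_0^{1-\beta/q},\ell_1^{1-\beta/q})$ where $c'=c'(t,q)$ is small enough that $K' T^q/m^{q-\beta}\leq e^{-(t+1)}$ for every $m\in[\ell_0,\ell_1]$.

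The reason this choice of $T$ works uniformly in $m$ is the elementary observation that $m\mapsto m^{1-\beta/q}$ is monotone, so its minimum over $[\ell_0,\ell_1]$ is attained at one of the endpoints --- at $\ell_0$ when $\beta<q$ and at $\ell_1$ when $\beta>q$ --- which is exactly the quantity appearing in the lemma. With this choice, the $m$-th term is at most $K_0 e^{-(t+1)m}$, and summing the geometric tail $\sum_{m\geq \ell_0}K_0 e^{-(t+1)m}$ yields $\Pr(M(\F)\leq T)\leq \exp(-\ell_0 t)$ after absorbing $K_0/(1-e^{-(t+1)})$ into a slightly smaller $c'$. There is no real obstacle here: the argument is essentially bookkeeping around Stirling, and the only tiny subtlety is recognizing that the monotonicity of $m^{1-\beta/q}$ forces the $\min$ of the two endpoint values to appear naturally.
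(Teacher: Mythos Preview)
Your proposal is correct and follows essentially the same approach as the paper's proof: a first-moment/union bound, the same Gamma-function tail estimate $\Pr(X_F\leq T)\leq \Gamma(1+q)^m T^{qm}/\Gamma(1+qm)$ followed by Stirling, and the same choice of threshold $T$ leading to a geometric sum from $m=\ell_0$. The only cosmetic difference is that you compute the tail bound directly as a Dirichlet integral against the density $q x^{q-1}$, whereas the paper phrases the same computation as the volume of the positive orthant of a $(1/q)$-norm ball; these are equivalent via the change of variables $u_i=x_i^q$.
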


For a set with $m$ elements, the probability that their total cost is below some given value $\bigcost\ll N:=|S|$ decays superexponentially fast as a function of $m$ (see claim \ref{claim:uniforms-sum}). On the other hand, the number of sets in $\F$ with $m$ elements, might increase superexponentially fast. If $\beta< q$, then the former decay rate beats the latter growth rate, so that sets with few elements dominate the expected number of `cheap' sets. In this case, $\ell_0^{1-\beta/q}\ll \ell_1^{1-\beta/q}$, so that $M(\F)=\Omp(\ell_0^{1-\beta/q})$. 
If instead $\beta\geq  q$, the large sets dominate, and $M(\F)=\Omp(\ell_1^{1-\beta/q})$.
 
\begin{proof}
We will apply the first moment method to the number of `cheap' sets in $\F$. 
For some $\bigcost$ to be determined later, let $R_m$ be the (random) number of $F\in \F$ with precisely $m$ elements and with $X_F\leq \bigcost$. (For $m< \ell_0$ or $m> \ell_1$, $R_m=0$.)

Then $R:= \sum_m R_m$ is the total number of sufficiently cheap sets, and by Markov's inequality $\Pr(M(\F)\leq \bigcost) \leq \E R $. 
Now, $\E [R_m] \leq {c^m m^{\beta m}\cdot \Pr(X_F\leq \bigcost)}$, where $F$ is any set with $m$ elements.
\begin{claim}
\label{claim:uniforms-sum}
If $U_i$ are i.i.d. uniform $[0,1]$ random variables and $q>0$ is a constant, then $\Pr(\sum_{i=1}^m U_i^{1/q}\leq \bigcost)\leq  \frac{\Gamma(1+q)^m}{\Gamma(1+qm)}\bigcost^m=2^{O(m)}\left(\bigcost/m\right)^{qm}$. 
\end{claim}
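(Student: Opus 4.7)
The plan is to express the probability as an integral of the joint density and then invoke the classical Dirichlet integral formula. Setting $V_i := U_i^{1/q}$, we have $\Pr(V_i \leq v) = v^q$ on $[0,1]$, so each $V_i$ has density $q v^{q-1} \mathbf{1}_{[0,1]}(v)$. Consequently
\[
\Pr\Big(\sum_{i=1}^m V_i \leq \bigcost\Big) = q^m \int_{\substack{0 \leq v_i \leq 1 \\ v_1 + \cdots + v_m \leq \bigcost}} \prod_{i=1}^m v_i^{q-1}\, dv_1 \cdots dv_m.
\]

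Because the integrand is non-negative, dropping the upper constraints $v_i \leq 1$ only enlarges the region and gives an upper bound. The remaining integral is the standard Dirichlet integral over the simplex $\{v_i \geq 0,\ \sum v_i \leq \bigcost\}$, which equals $\Gamma(q)^m \bigcost^{qm}/\Gamma(qm+1)$. Combining with the factor $q^m$ and using $q\,\Gamma(q) = \Gamma(q+1)$ yields
\[
\Pr\Big(\sum_{i=1}^m V_i \leq \bigcost\Big) \;\leq\; \frac{\Gamma(q+1)^m}{\Gamma(qm+1)}\, \bigcost^{qm},
\]
which matches the claimed inequality (the exponent $\bigcost^m$ in the statement appears to be a typo for $\bigcost^{qm}$; otherwise the two sides of the stated equality disagree in their $\bigcost$-dependence for $q \neq 1$, as one can check already with $m=1,\ q=2$).

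The asymptotic form is then immediate from Stirling: since $\Gamma(qm+1) = \Theta(\sqrt{qm})\,(qm/e)^{qm}$, we get
\[
\frac{\Gamma(q+1)^m \bigcost^{qm}}{\Gamma(qm+1)} \;=\; \Theta\!\Big(\frac{1}{\sqrt{qm}}\Big)\cdot\big(\Gamma(q+1)(e/q)^q\big)^{m}\Big(\frac{\bigcost}{m}\Big)^{qm},
\]
and the $q$-dependent constant $\Gamma(q+1)(e/q)^q$ is absorbed into the $2^{O(m)}$ prefactor. There is no real obstacle here: the proof is essentially a single Dirichlet integral estimate followed by Stirling. The one small check is noting that the upper constraints $v_i \leq 1$ can be dropped without loss, which is automatic because the integrand is non-negative (and in fact the constraints are already implied by $\sum v_i \leq \bigcost$ whenever $\bigcost \leq 1$).
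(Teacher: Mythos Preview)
Your proof is correct and essentially matches the paper's: the paper works directly in the $U_i$ coordinates, interprets the probability as the volume of the intersection of the unit cube with the positive orthant of a $(1/q)$-norm ball, drops the cube constraint, and quotes the known volume formula $\frac{\Gamma(1+q)^m}{\Gamma(1+qm)}\bigcost^{qm}$; your change of variables $V_i=U_i^{1/q}$ followed by the Dirichlet integral is the same computation after a substitution. Your observation that the stated exponent $\bigcost^m$ is a typo for $\bigcost^{qm}$ is also correct (the paper's own proof writes $\bigcost^{qm}$).
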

\begin{proof}[Proof of claim]
Let $A:=\{x\in \R_+^m: \sum_{i=1}^m |x_i|^{1/q}\leq \bigcost \}$ be the positive orthant of the $(1/q)$-norm ball with radius $\bigcost$, and let $B:=[0,1]^m$ be the unit box. The probability of the event $\{\sum_{i=1}^m U_i^{1/q}\leq \bigcost\}$ equals the $m$-dimensional volume $\mu(A\cap B)\leq \mu(A)$. 
The volume of $A$ is known to be $\mu(A)=\frac{\Gamma(1+q)^m}{\Gamma(1+qm)}\bigcost^{qm}$ (see \cite{vol-of-p-balls} for a concise derivation).
The asymptotic expression comes from noting that the numerator is $2^{O(m)}$, and using Stirling's approximation on the denominator.
\end{proof}

From the claim we have that $\Pr(X_F\leq \bigcost) \leq 2^{O(m)}(\bigcost/m)^{qm}$ (for $F$ with $|F|=m$) and hence $\E[R_m] \leq (c_0 \bigcost/m^{1-\beta/q})^{qm}$ for some $c_0$.
 If we pick $\bigcost:=c_1 c_0^{-1}\cdot \min(\ell_0^{1-\beta/q},\ell_1^{1-\beta/q})$ for some $c_1>0$, we get that $c_0\bigcost/ m^{1-\beta/q}\leq c_1$ for all $m$, and hence $\E [R_m] \leq c_1^{qm}$.
Thus $\E [X]=\sum_{m=\ell_0}^{\ell_1} \E[R_m] < c_1^{q\ell_0}/(1- c_1^q)$, which is less than $\exp(-\smallcost \ell_0)$ if $c_1$ is sufficiently small. 
\end{proof}

\section{Upper tail bound}
To apply Theorem \ref{thm:concentration} it is sometimes helpful to first have rougher bounds on the tails of $M(\F)$. Here we provide a simple bound on the upper tail.
\label{section:uppertail}
\begin{proposition}
If $\mu$ is the median of $M(\F)$, then for any $t\geq 0$,
\begin{equation}
    \Pr(M(\F)>t\mu )\leq 2^{1-t^q}.
    \label{bound:uppertail}
\end{equation}
Furthermore, $\E M(\F) \leq C_q\mu$ for some constant $C_q$ only depending on $q$, and thus \cref{bound:uppertail} holds with $\mu$ replaced with $\E M(\F)$ and the $2$ in the right-hand side replaced with some $r>1$.
\end{proposition}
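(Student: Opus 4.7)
The plan is to prove the tail bound \cref{bound:uppertail} first using an iterated coupling, then deduce the expectation bound by integration, and finally deduce the expectation-centered bound using both. The key tool is the iterated version of Assumption~$A(q)$ from \cref{remark:Aq-iter}: for any positive integer $k$, couple each $X_i$ to $k$ independent copies $Y^{(1)}_i,\ldots,Y^{(k)}_i$ (each distributed as $X_i$) so that $X_i \leq k^{1/q}\min_{j\in[k]} Y^{(j)}_i$. Summing over $i\in F$ and using the elementary inequality $\sum_i \min_j a_{ij} \leq \min_j \sum_i a_{ij}$ gives $X_F \leq k^{1/q}\min_j Y^{(j)}_F$, and taking a minimum over $F\in\F$ yields $M(\F) \leq k^{1/q}\min_j M_j(\F)$, where $M_j(\F) := \min_{F\in\F} Y^{(j)}_F$ are i.i.d.\ copies of $M(\F)$. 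The definition of the median then gives $\Pr(M(\F) > k^{1/q}\mu) \leq \Pr(\min_j M_j(\F) > \mu) = \Pr(M(\F) > \mu)^k \leq 2^{-k}$. For general $t\geq 0$, taking $k := \lfloor t^q\rfloor$ gives $k^{1/q} \leq t$ and $k \geq t^q - 1$, so $\Pr(M(\F) > t\mu) \leq 2^{-k} \leq 2^{1-t^q}$ (the bound being trivial for $t<1$, where $k=0$).

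For the expectation bound, I would simply integrate the tail: $\E M(\F) = \int_0^\infty \Pr(M(\F) > s)\,ds \leq \mu\int_0^\infty 2^{1-t^q}\,dt = C_q\mu$, where a standard substitution reduces the integral to $C_q := 2(\ln 2)^{-1/q}\Gamma(1+1/q) < \infty$ for all $q > 0$.

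For the final claim, I would combine $\E M \leq C_q\mu$ with the elementary lower bound $\E M \geq \mu\Pr(M \geq \mu) \geq \mu/2$ to deduce that $\alpha := \E M/\mu$ lies in the bounded interval $[1/2, C_q]$. Substituting $s = t\E M = t\alpha\mu$ into the tail bound gives $\Pr(M(\F) > t\E M) \leq 2^{1-(t\alpha)^q}$, which has the same sub-Weibull shape as \cref{bound:uppertail} and can be rewritten in the form $r^{1-t^q}$ for an appropriate constant $r = r(q) > 1$ by absorbing the $\alpha$-rescaling into the base.

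The main subtle step is the first: I need to verify that the iterated coupling from \cref{remark:Aq-iter} can be set up so that the families $\{Y^{(j)}_i\}_{i\in S}$ are genuinely independent across $j\in[k]$ (and each has the same joint distribution as $\{X_i\}_{i\in S}$), which is what underpins the factorization $\Pr(\min_j M_j > \mu) = \Pr(M > \mu)^k$ and the claim that the $M_j(\F)$ are i.i.d.\ copies of $M(\F)$. Granted this, the remainder is a routine choice of $k$, a standard integral, and a mean--median comparison.
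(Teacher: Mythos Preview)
Your argument is essentially the paper's own: the same iterated $A(q)$ coupling, the same choice $k=\lfloor t^q\rfloor$, and the same tail integral yielding $C_q=2(\ln 2)^{-1/q}\Gamma(1+1/q)$; the only cosmetic difference is that you pass from $X_i\le k^{1/q}\min_j Y_i^{(j)}$ to $M(\F)\le k^{1/q}\min_j M_j(\F)$ via $\sum_i\min_j\le\min_j\sum_i$, whereas the paper picks, for each $j$, the set $F$ attaining $M^{(j)}$ and bounds $X_F$ directly.

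One small caveat on your last paragraph: the bound you actually obtain is $\Pr(M>t\,\E M)\le 2^{1-(\alpha t)^q}$ with $\alpha\in[1/2,C_q]$, and this cannot literally be rewritten as $r^{1-t^q}$ for a single constant $r>1$ (try $t$ just above $1$). The paper is equally informal at this point; what both of you have shown is the sub-Weibull tail $\Pr(M>t\,\E M)\le 2^{1-(t/2)^q}$, which is all that is ever needed.
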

\begin{proof}
Let $k:=\lfloor t^q\rfloor$, and assume WLOG that $k\geq 2$: If $t^q\leq 1$, then $2^{1-t^q}\geq 1$ and there is nothing to prove, and if $1<t^q<2$, then $t\mu>\mu$ and hence $\Pr(M(\F)>t\mu )\leq 1/2<2^{1-t^q}$. For each $i\in S$, and $j\in [k]$, let $X_i^{(j)}$ be i.i.d random variables following the same distribution as $X_i$.
As noted in remark \ref{remark:Aq-iter}, we can couple these random variables $X_i^{(j)}$ to $X_i$ such that surely 
$X_i\leq k^{1/q}\cdot \min_{j\in[k]}(X_i^{(j)}).$

Let $M^{(j)}$ be defined as $M(\F)$ but with edge weights $X_i^{(j)}$. Now, if $M^{(j)}= x$ for some $j,x$, by definition there exists an $F\in \F$ with $X_F^{(j)}= x$, and this $F$ then has weight $X_F=\sum_{i\in F}X_i\leq \sum_{i\in F}k\cdot X_i^{(j)}= kx$. This implies that surely
\[
M(\F)\leq X_F\leq k^{1/q}\cdot \min_{j\in[k]}(M^{(j)}),
\] 
and $k^{1/q}\leq t$ by the choice of $k$.
Noting that $\Pr(\min_{j\in[k]}(M^{(j)})> \mu) \leq 2^{-k}\leq 2^{1-t^q}$, the first part of the claim follows. 

For the `furthermore' part, we will use $\E M(\F)=\int_0^\infty \Pr(M(\F)> x)dx$. From the first part the integrand is at most $2^{1-(x/\mu)^q}$. After a change of variables $z=\ln(2)(x/\mu)^q$, its integral becomes $\int_0^\infty 2^{1-(x/\mu)^q}dx= {2}{\ln(2)^{-1/q}}\Gamma(1+1/q)\cdot\mu$. 
\end{proof}

\bibliographystyle{plain}
\bibliography{bib}
\end{document}